\documentclass[11pt]{amsart}
\usepackage{a4, amsmath, bm, amssymb, mathrsfs}
\usepackage{setspace}
\usepackage{subfiles}
\usepackage{cite}
\usepackage{enumerate}
\usepackage{url}

\usepackage{a4,amsmath,amssymb,amscd,verbatim,bbm,graphicx,enumerate,tikz}

\definecolor{crimson}{rgb}{0.86, 0.08, 0.24}
\definecolor{bleudefrance}{rgb}{0.19, 0.5, 0.91}
\usepackage[pdftex,colorlinks,linkcolor=bleudefrance,citecolor=crimson,filecolor=black]{hyperref}

\usepackage{faktor}
\usepackage{dsfont}
\usepackage{amsthm}

\makeatletter
\newtheorem*{rep@theorem}{\rep@title}
\newcommand{\newreptheorem}[2]{%
\newenvironment{rep#1}[1]{%
 \def\rep@title{#2 \ref{##1}}%
 \begin{rep@theorem}}%
 {\end{rep@theorem}}}
\makeatother

\newtheorem{theorem}{Theorem}
\newtheorem{lemma}[theorem]{Lemma}

\newtheorem{proposition}[theorem]{Proposition}
\newtheorem{corollary}[theorem]{Corollary}

\theoremstyle{definition}

\newenvironment{example}[1][Example]{\begin{trivlist}
\item[\hskip \labelsep {\bfseries #1}]}{\end{trivlist}}

\numberwithin{equation}{section}
\numberwithin{theorem}{section}

\def\N{\mathbb{N}}

\def\Z{\mathbb{Z}}
\def\R{\mathbb{R}}

\begin{document}
  
\normalsize

\bibliographystyle{plain} \title[Distribution in homology classes and discrete fractal dimension]
{Distribution in homology classes and discrete fractal dimension} 

\author{James Everitt} 

\author{Richard Sharp} 
\address{Mathematics Institute, University of Warwick,
Coventry CV4 7AL, U.K.}

\thanks{\copyright 2023. This work is licensed by a CC BY license.}

\keywords{}

\begin{abstract}
In this note we examine the proportion of periodic orbits of Anosov flows that lie in an
infinite zero density 
subset of the first homology group. We show that on a logarithmic scale we get convergence
to a discrete fractal dimension.
\end{abstract}

\maketitle

%
%
%
%
%
%
%
%
%

\section{introduction}
There has been a considerable body of research on how closed geodesics on compact negatively curved manifolds and, more generally, periodic orbits of Anosov flows are distributed in homology classes, for example \cite{Anan}, \cite{bab-led}, \cite{Katsuda-Sunada87},
\cite{Katsuda-Sunada90}, \cite{Kotani}, \cite{lalley}, \cite{phillips-sarnak}, \cite{Pollicott-Sharp-GD},
\cite{Sharp93}.
To state these results more precisely, let $\phi^t : M \to M$ be a transitive Anosov flow such that the winding cycle associated to the measure of maximal entropy vanishes. This class of flows 
includes
geodesic flows over compact negatively curved manifolds.
The basic counting result is that the number of of period orbits of length at most $T$ and lying in a homology class $\alpha \in H_1(M,\Z)$ is asymptotic to $(\mathrm{constant}) \times e^{hT}/T^{1+k/2}$, where $h$ is the topological entropy of the flow and $k\ge 0$ is the first Betti number of $M$.
Furthermore, the distribution is Gaussian and the constant above is related to the variance
\cite{lalley}, \cite{pet-ris-crelle}, \cite{Sharp04tams}.

It is also interesting to ask about the distribution of periodic orbits lying in a set $A \subset H_1(M,\Z)$. 
If $A$ is finite, the behaviour follows from that for single homology classes, so we suppose that 
$A$ is infinite. 
This, of course, implies that $H_1(M,\Z)$ is infinite, i.e. $k \ge 1$.
Petridis and Risager \cite{pet-ris-forum} (for compact hyperbolic surfaces)
and Collier and Sharp \cite{collier-sharp} (for Anosov flows for which the measure of maximal entropy has vanishing winding cycle) independently showed if $A$ has positive density then
the proportion of periodic orbits of length at most $T$ lying in $A$ converges to the density of $A$ (with respect to an appropriate norm), as $T \to \infty$.
To state this and our new results more precisely, let $\mathscr P$ denote the set of prime
periodic orbits for $\phi$ and, for $\gamma \in \mathscr P$, let $\ell(\gamma)$ denote the least period of $\gamma$ and $[\gamma] \in H_1(M,\Z)$ denote the homology class of $\gamma$.
It is convenient to ignore any torsion in $H_1(M,\Z)$, so we can think of $H_1(M,\Z)$
as a lattice in $H_1(M,\R)\cong \R^k$. Write $\mathscr P_T = \{\gamma \in \mathscr P \hbox{ : } 
\ell(\gamma) \le T\}$, $\mathscr P_T(\alpha) = \{\gamma \in \mathscr P_T \hbox{ : } [\gamma]=\alpha\}$
and $\mathscr P_T(A) = \bigcup_{\alpha \in A} \mathscr P_T(\alpha)$.
Fixing a norm $\|\cdot\|$ on $H_1(M,\R)$, 
write $\mathfrak N_A(r) = \#\{\alpha \in A \hbox{ : } \|\alpha\|\le r\}$ and
$\mathfrak N(r) = \mathfrak N_{H_1(M,\Z)}(r)$.
We say that $A \subset H_1(M,\Z)$ has density 
$d_{\|\cdot\|}(A)$ (with respect to $\|\cdot\|$) if 
\[
\lim_{r \to \infty} \frac{\mathfrak N_A(r)}{\mathfrak N(r)} = d_{\|\cdot\|}(A).
\]

\begin{proposition}[Collier and Sharp \cite{collier-sharp}, Petridis and Risager \cite{pet-ris-forum}]
Let $\phi^t : M \to M$ be a transitive Anosov flow for which the winding cycle associated to the measure of maximal entropy vanishes.
Then there exists a norm $\|\cdot\|$
on $H_1(M,\R)$ such that if $A \subset H_1(M,\Z)$ has density $d_{\|\cdot\|}(A)$ then
\[
\lim_{T \to \infty} \frac{\#\mathscr P_T(A)}{\#\mathscr P_T} = d_{\|\cdot\|}(A).
\]
\end{proposition}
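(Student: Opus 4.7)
The plan is to deduce the statement from the local central limit theorem (LCLT) for periodic orbits in homology developed in \cite{lalley}, \cite{Sharp93}, \cite{pet-ris-crelle}. Under the vanishing winding cycle hypothesis, there exists a positive definite quadratic form $Q$ on $H_1(M,\R)$ and a constant $C_0>0$ such that, uniformly for $\alpha\in H_1(M,\Z)$ in a range of the form $\|\alpha\|\le c\sqrt{T\log T}$,
\[
\#\mathscr P_T(\alpha) = \frac{C_0\, e^{hT}}{T^{1+k/2}}\, e^{-Q(\alpha)/(2T)}\,(1+o(1)).
\]
This dictates the choice of norm in the statement: take $\|\cdot\|$ to be the Euclidean norm associated with $Q$, i.e.\ $\|\alpha\|^2 = Q(\alpha)$.

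Next I would split both $\#\mathscr P_T(A)$ and $\#\mathscr P_T$ according to whether $\|\alpha\|\le c\sqrt{T\log T}$ or not. The contribution from the complementary tail is controlled by the large-deviation bounds coming from the same symbolic dynamical setup and is of strictly smaller exponential order than the Margulis asymptotic $\#\mathscr P_T \sim e^{hT}/(hT)$, so it is negligible in the ratio. Hence, up to a $(1+o(1))$ factor, the problem reduces to proving
\[
\frac{\sum_{\alpha\in A} e^{-\|\alpha\|^2/(2T)}}{\sum_{\alpha\in H_1(M,\Z)} e^{-\|\alpha\|^2/(2T)}} \longrightarrow d_{\|\cdot\|}(A).
\]

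The remaining step is an Abelian type argument. Writing each sum as a Stieltjes integral against the relevant counting function and integrating by parts yields
\[
\sum_{\alpha} e^{-\|\alpha\|^2/(2T)} = \int_0^\infty \frac{r}{T}\, e^{-r^2/(2T)}\, \mathfrak N_\bullet(r)\, dr,
\]
where the weight $(r/T)e^{-r^2/(2T)}$ concentrates on the scale $r\sim\sqrt{T}\to\infty$. After the substitution $r=\sqrt{T}s$, the hypothesis $\mathfrak N_A(r)/\mathfrak N(r)\to d_{\|\cdot\|}(A)$ together with dominated convergence (using polynomial growth of $\mathfrak N$ and the Gaussian decay of the weight) delivers the desired limit.

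The main obstacle is the uniform version of the LCLT: pointwise asymptotics for each single $\alpha$ are insufficient because $A$ is infinite and we must sum over $\alpha$ whose norm grows with $T$. The necessary uniformity is obtained via symbolic coding of the flow and analytic perturbation theory for a two-variable transfer operator (the second variable being the Fourier dual of homology), followed by Fourier inversion over the dual torus $H_1(M,\R)^*/H^1(M,\Z)$; this is the technical heart of the argument and is precisely where the vanishing winding cycle assumption is used, ensuring that the leading eigenvalue is maximised at the trivial character.
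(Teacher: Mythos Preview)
The paper does not prove this proposition; it is quoted as a known result, attributed to \cite{collier-sharp} and \cite{pet-ris-forum}, and used only as background motivation for the main theorem. Consequently there is no ``paper's own proof'' to compare against.

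That said, your outline is correct and is precisely the argument one would extract from the tools the paper assembles for Theorem~\ref{thm:main_logarithmic}. The norm you are looking for is defined explicitly in Section~\ref{sec:anosov}: $\|\rho\|^2 = \langle \rho, \mathcal H \rho\rangle$ with $\mathcal H = -\nabla^2\mathfrak h(0) = (\nabla^2\mathfrak p(0))^{-1}$, so your quadratic form $Q$ is $\langle \cdot,\mathcal H\cdot\rangle$. The uniform LCLT you invoke is exactly Proposition~\ref{prop:uniform_asymptotic} together with the Taylor expansion (\ref{eq:taylor}) and its consequence (\ref{eqn:uniform_asymp}); the tail estimate you need is what the paper proves in Lemma~\ref{lem:large_terms}. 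Your Abelian/Stieltjes step at the end is the one genuinely additional ingredient beyond what the paper writes out, and it is straightforward. So your proposal is sound, and it mirrors the structure of the paper's proof of the main theorem, just with the hypothesis of positive density replacing discrete mass dimension.
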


The norm (defined in Section \ref{sec:anosov} below) is a Euclidean norm determined by the 
second derivative of a pressure function.

Now suppose $A$ has \emph{density zero}. It is interesting to ask whether we can obtain more precise 
information about the behaviour of 
\[
\mathscr D(T,A) := \frac{\#\mathscr P_T(A)}{\#\mathscr P_T}
\]
as $T \to \infty$. If we write $\rho_A(r) = \mathfrak N_A(r)/\mathfrak N(r)$, then the 
naive conjecture is that
$
\mathscr D(T,A)$ is of order $\rho_A(\sqrt{t})$, as $T \to \infty$, and this is consistent with 
case $A =\{\alpha\}$. It is too optimistic to hope that a precise asymptotic relation holds for general $A$. Nevertheless, one might hope for information on the logarithmic scale if we use some notion of discrete fractal dimension. We say that $A$ has {\it discrete mass dimension} $\delta$ if
\[
\frac{\log \mathfrak N_A(r)}{\log r} = \delta
\]
or, equivalently, that if 
\begin{equation}\label{eq:def_of_kappa}
\mathfrak N_A(r) =  r^{\delta}\kappa_A(r)
\end{equation}
then $\lim_{r \to \infty} \log \kappa_A(r)/\log r =0$.
(Note that this is independent of the choice of norm $\|\cdot\|$.)
For a discussion of discrete fractal dimensions, see \cite{BarlowTaylor}.

\begin{example}
Suppose $A \subset \Z$ is given by $A = \{\pm m^2 \hbox{ : } m \in \N\}$ then the discrete mass dimension
of $A$ is $1/2$. More interesting examples appear in percolation theory
(see, for example, \cite{Heydenreich}).
\end{example}

Our main result is the following.

\begin{theorem}\label{thm:main_logarithmic}
Let $\phi^t : M \to M$ be a transitive Anosov flow for which the winding cycle associated to the measure of maximal entropy vanishes.
If $A \subset H_1(M,\Z)$ has discrete mass dimension $\delta$ then
\[
\lim_{T \to \infty} \frac{\log \mathscr D(T,A)}{\log T} = \frac{\delta -k}{2}.
\]
\end{theorem}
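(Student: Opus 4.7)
The plan is to reduce the count in the theorem to a theta-type sum and analyse that sum via the bounds coming from the discrete mass dimension. The starting point is the local central limit theorem (LCLT) for periodic orbits of Anosov flows with vanishing winding cycle, due to Lalley and Babillot--Ledrappier (and refined by Pollicott--Sharp and Sharp). With the norm $\|\cdot\|$ from the proposition, this gives
\[
\frac{\#\mathscr P_T(\alpha)}{\#\mathscr P_T} \;=\; \frac{C}{T^{k/2}}\,\exp\!\left(-\frac{\|\alpha\|^2}{2T}\right)(1+o(1))
\]
uniformly for $\alpha$ in a window $\|\alpha\|\le T^{1/2+\eta}$, together with a Gaussian-type upper bound of the same form (with a possibly smaller constant in the exponent) valid for all $\alpha$; for $\|\alpha\|$ of linear order in $T$ a Manning bound rules out contributions. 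Granting this uniformity, summing over $\alpha\in A$ turns the proof into the problem of estimating
\[
S_A(T) \;:=\; \sum_{\alpha\in A}\exp\!\left(-\frac{\|\alpha\|^2}{2T}\right),
\]
after which $\mathscr D(T,A) \asymp T^{-k/2} S_A(T)$.

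For $S_A(T)$ I would use the hypothesis $\mathfrak N_A(r)=r^\delta \kappa_A(r)$ with $\kappa_A$ subpolynomial, which gives, for every $\epsilon>0$, bounds $r^{\delta-\epsilon}\le \mathfrak N_A(r)\le r^{\delta+\epsilon}$ for all large $r$. A lower bound comes by keeping only terms with $\|\alpha\|\le \sqrt T$:
\[
S_A(T) \;\ge\; e^{-1/2}\,\mathfrak N_A(\sqrt T) \;\ge\; e^{-1/2}\,T^{(\delta-\epsilon)/2}.
\]
An upper bound comes by a dyadic/Abel decomposition into annuli $n\sqrt T<\|\alpha\|\le (n+1)\sqrt T$:
\[
S_A(T) \;\le\; 1+\sum_{n=1}^\infty \mathfrak N_A\bigl((n+1)\sqrt T\bigr)\,e^{-n^2/2} \;\le\; C_\epsilon\,T^{(\delta+\epsilon)/2}.
\]
Combining with the LCLT prefactor $T^{-k/2}$ and taking logarithms, both inequalities pinch $\log\mathscr D(T,A)/\log T$ into $[(\delta-k)/2-\epsilon/2,\,(\delta-k)/2+\epsilon/2]$ up to a term $O(1/\log T)+O(\log\kappa_A(\sqrt T)/\log T)$, which vanishes as $T\to\infty$ by the assumption on $\kappa_A$. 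Letting $\epsilon\to 0$ gives the theorem.

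The main obstacle is the uniform LCLT, or rather its uniform upper-bound companion: the asymptotic is only guaranteed in a window $\|\alpha\|=O(T^{1/2+\eta})$, so the contribution from $\alpha\in A$ with $\|\alpha\|$ much larger than $\sqrt T$ must be bounded separately. I expect this to follow from the symbolic coding of the flow and a transfer operator/saddle point analysis applied to characters $\chi\in \widehat{H_1(M,\R)/H_1(M,\Z)}$, yielding a Gaussian upper envelope in the full range $\|\alpha\|\lesssim T$ (beyond which the orbit set is empty by a Manning-type bound). Once that uniform bound is in place, the rest of the argument reduces to elementary comparisons between $S_A(T)$ and an integral of $r^\delta$ against a Gaussian weight, and the discrete mass dimension enters only through the two elementary power bounds on $\mathfrak N_A$.
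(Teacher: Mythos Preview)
Your strategy is essentially the paper's: reduce to a Gaussian/theta sum via the local limit theorem, bound the sum from below by restricting to $\|\alpha\|\le\sqrt T$, and bound it from above after controlling the tail. The paper's input for the LCLT is the uniform asymptotic of Babillot--Ledrappier/Lalley/Sharp (their Proposition~2.1), and your identification of the ``main obstacle'' is exactly right: the nontrivial part is the tail.

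The one place where your write-up diverges from the paper is how that tail is dispatched. You posit a pointwise Gaussian upper envelope $\#\mathscr P_T(\alpha)/\#\mathscr P_T \le C\,T^{-k/2}e^{-c\|\alpha\|^2/T}$ valid for all $\alpha$ and then run a dyadic decomposition on the full theta sum $S_A(T)$. The paper does not establish such a global pointwise bound; instead it cuts at $\eta\sqrt{T\log T}$ (not $T^{1/2+\eta}$), uses the Taylor expansion $\mathfrak h(\rho)=h-\|\rho\|^2/2+O(\|\rho\|^3)$ inside this window, and for the tail simply drops the condition $\alpha\in A$ and uses Proposition~2.1 to compare $\sum_{\|\alpha\|>\eta\sqrt{T\log T}} \#\mathscr P_T(\alpha)$ with a Gaussian integral over $\{\|x\|>\eta\}$, yielding $O(T^{-\eta^2/2}(\log T)^{3k/2-2})$. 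Choosing $\eta>\sqrt{k-\delta}$ kills the tail below $T^{(\delta-k)/2}$. This sidesteps the need for any envelope beyond the uniform range of Proposition~2.1. Your dyadic bound on $S_A(T)$ is a perfectly good alternative for the theta sum itself, but the link $\mathscr D(T,A)\asymp T^{-k/2}S_A(T)$ still needs the tail argument you flagged; the paper's cut-and-integrate device is a concrete way to fill precisely that gap without new machinery. Note also that your window $T^{1/2+\eta}$ must satisfy $\eta<1/6$ for the cubic Taylor error $O(\|\alpha\|^3/T^2)$ to stay bounded, whereas the paper's choice $\eta\sqrt{T\log T}$ keeps that error $O((\log T)^{3/2}/\sqrt T)$.
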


Let $\Sigma$ be a compact orientable surface of genus $\mathfrak g \ge 2$ with a Riemannian
metric $g$ of negative curvature and let $T^1\Sigma$ denote the unit tangent bundle. 
Then the natural projection $p : T^1\Sigma \to \Sigma$ induces a homomorphism
$p_\ast : H_1(T^1\Sigma,\Z) \to H_1(\Sigma,\Z) \cong \Z^{2g}$ whose kernal
is the torsion subgroup, and induces a bijection between the prime periodic orbits of the geodesic flow and primitive closed geodesics on $\Sigma$ such that
$\ell(\gamma) = \mathrm{length}_g(p(\gamma))$ and $p_\ast([\gamma]) = [p(\gamma)]$.
If, for $A \subset H_1(\Sigma,\Z)$, we define
$
\mathscr D_{\Sigma}(T,A)$
to be the proportion of closed primitive geodesics on $\Sigma$ with $g$-length at most $T$ and
with homology class in $A$, then we have the following corollary.

\begin{corollary}\label{cor:surfaces}
Let $\Sigma$ be a compact orientable surface of genus $\mathfrak g \ge 2$ with a Riemannian
metric of negative curvature.
If $A \subset H_1(\Sigma,\Z)$ has discrete mass dimension $\delta$ then
\[
\lim_{T \to \infty} \frac{\log \mathscr D_{\Sigma}(T,A)}{\log T} = \frac{\delta -2\mathfrak g}{2}.
\]
\end{corollary}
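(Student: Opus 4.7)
The plan is to deduce the corollary directly from Theorem \ref{thm:main_logarithmic} applied to the geodesic flow $\phi^t: T^1\Sigma \to T^1\Sigma$, using the bijection between prime periodic orbits of $\phi^t$ and primitive closed geodesics on $\Sigma$ that is recalled in the excerpt. First I would check that $\phi^t$ satisfies the hypotheses of Theorem \ref{thm:main_logarithmic}: it is a transitive Anosov flow (classical for geodesic flows on compact negatively curved manifolds, due to Anosov), and the winding cycle of its measure of maximal entropy vanishes. The latter follows from the time-reversal symmetry $v \mapsto -v$ on $T^1\Sigma$, which preserves the Bowen–Margulis measure and sends each closed orbit $\gamma$ to a closed orbit of the same period with homology class $-[\gamma]$, forcing the winding cycle to be zero (any pairing of a closed $1$-form with the measure is forced to equal its own negative).

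Next I would identify the first Betti number of $M = T^1\Sigma$. The hypothesis of the corollary tells us that $p_* : H_1(T^1\Sigma,\Z) \to H_1(\Sigma,\Z) \cong \Z^{2\mathfrak g}$ has kernel equal to the torsion subgroup. Since $H_1(\Sigma,\Z)$ is free of rank $2\mathfrak g$, this shows $k := \mathrm{rank}\, H_1(T^1\Sigma,\Z) = 2\mathfrak g$, so that the exponent $(\delta - k)/2$ appearing in Theorem \ref{thm:main_logarithmic} becomes $(\delta - 2\mathfrak g)/2$, which is exactly the exponent claimed in Corollary \ref{cor:surfaces}.

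Working modulo torsion, $p_*$ descends to an isomorphism $\overline{p}_* : H_1(T^1\Sigma,\Z)/\mathrm{tor} \xrightarrow{\sim} H_1(\Sigma,\Z)$. Under this isomorphism, set $\widetilde A := \overline{p}_*^{-1}(A) \subset H_1(T^1\Sigma,\Z)/\mathrm{tor}$. Since discrete mass dimension is invariant under the choice of norm on $H_1(M,\R)$ (as noted after \eqref{eq:def_of_kappa}) and under lattice isomorphism, $\widetilde A$ has the same discrete mass dimension $\delta$ as $A$. Using the bijection between $\mathscr P$ and primitive closed geodesics on $\Sigma$, together with the identities $\ell(\gamma) = \mathrm{length}_g(p(\gamma))$ and $p_\ast([\gamma]) = [p(\gamma)]$, one sees that a prime periodic orbit $\gamma$ lies in $\mathscr P_T(\widetilde A)$ if and only if $p(\gamma)$ is a primitive closed geodesic of length at most $T$ whose homology class lies in $A$. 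Consequently $\mathscr D(T,\widetilde A) = \mathscr D_\Sigma(T,A)$, and Theorem \ref{thm:main_logarithmic} applied to $\widetilde A$ yields the conclusion.

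The only point requiring real verification is the vanishing of the winding cycle, and even that is a standard consequence of time-reversal symmetry; the rest is bookkeeping to match the invariants of $T^1\Sigma$ with those of $\Sigma$. I would therefore expect the proof to be essentially a single paragraph pointing to these three facts — Anosov property, vanishing winding cycle, and the torsion-kernel identification of $H_1$ — and then invoking Theorem \ref{thm:main_logarithmic}.
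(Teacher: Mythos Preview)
Your proposal is correct and matches the paper's approach: the paper does not give a separate proof of the corollary but simply sets up, in the paragraph preceding it, the identification $p_\ast : H_1(T^1\Sigma,\Z)/\mathrm{tor} \cong H_1(\Sigma,\Z) \cong \Z^{2\mathfrak g}$ and the bijection between prime periodic orbits and primitive closed geodesics, then notes in Section~\ref{sec:anosov} that geodesic flows over compact negatively curved manifolds satisfy the hypotheses of Theorem~\ref{thm:main_logarithmic}. Your write-up fills in exactly these points (including the standard time-reversal justification for $\Phi_\mu = 0$, which the paper asserts without proof), so it is the same argument made explicit.
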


\section{Anosov flows}\label{sec:anosov}

Let $M$ be a compact Riemannian manifold and $\phi^t : M \to M$ br a transitive
Anosov flow \cite{anosov}, \cite{fisher}. 
We suppose that $M$ has first Betti number $k \ge 1$ and ignore any torsion in 
$H_1(M,\Z)$.
Using the notation of the introduction, we say that
$\phi$ is \emph{homologically full} if the map $\mathscr P \to H_1(M,\Z) : \gamma \mapsto
[\gamma]$ is a surjection. This automatically implies that the flow is weak-mixing 
(since an Anosov flow fails to be weak-mixing only when it is a constant
suspension of an Anosov diffeomorphism \cite{plante}, it which case it can have no
null homologous periodic orbits) and hence that
\[
\#\mathscr P_T \sim \frac{e^{hT}}{hT},
\]
as $T \to \infty$, where $h>0$ is the topological entropy of $\phi$ \cite{margulis1},\cite{PP83}. 
There is a unique measure of maximal entropy $\mu$ for which 
the measure-theoretic entropy 
$h_\mu(\phi) =h$ \cite{Bowen_max}. (See \cite{fisher} for the notions of topological and measure-theoretic entropy for $\phi$.)

Let $\mathscr M_\phi$ denote the set of $\phi$-invariant Borel probability measures on $M$.
For a continuous function $f : M \to \R$, we define its \emph{pressure} $P(f)$ by
\[
P(f) = \sup\left\{h_\nu(\phi) + \int f d\nu \hbox{ : } \nu \in \mathscr M_\phi\right\}.
\]
Given $\nu \in \mathscr M_\phi$, we can define the associated
winding cycle $\Phi_\nu \in H_1(M,\R)$ by
\[
\langle \Phi_\nu, [\omega] \rangle = \int \omega(Z) \, d\nu,
\]
where $[\omega]$ is the cohomology class of the closed $1$-form $\omega$,
$Z$ is the vector generating $\phi$ and $\langle \cdot,\cdot \rangle$ is the duality pairing
(Schwartzmann \cite{Sch}, Verjovsky and Vila Freyer \cite{VV}).
Write $\mathscr B_\phi = \{\Phi_\nu \hbox{ : } \nu \in \mathscr M_\phi\}$;
this is a compact and convex subset of $H_1(M,\R)$.
The assumption that $\phi$ is homologically full 
is equivalent to $0 \in \mathrm{int}(\mathscr{B}_\phi)$ and
implies that there are fully supported measures
$\nu$ for which $\Phi_\nu =0$. We will impose the more stringent condition that $\Phi_\mu=0$, where $\mu$ is the measure of maximal entropy for $\phi$. 
This class includes geodesic flows overs over compact negatively manifolds with
negative sectional curvature. (In the case
considered in Corollary \ref{cor:surfaces}, $\mathscr B_\phi$ may be identified with the
unit-ball for the Federer--Gromov stable norm on $H_1(\Sigma,\R)$ \cite{Fed},\cite{Gromov}.)

Still assuming that $\Phi_\mu=0$, there is an analytic pressure function
$\mathfrak p : H^1(M,\R) \to \R$, defined by
$
\mathfrak p([\omega]) = P(\omega(Z))$ \cite{Katsuda-Sunada90},\cite{Sharp93}. This is a strictly convex function with
positive definite Hessian; it has a unique minimum at $0$.
For $\xi \in H^1(M,\R)$, we define
$\sigma_\xi >0$ by 
\[
\sigma_\xi^{2k} = \det \nabla^2 \mathfrak p(\xi)
\]
and set $\sigma=\sigma_0$.
There is also an analytic entropy function
$\mathfrak h : \mathrm{int}(\mathscr B_\phi) \to \R$ defined by
$
\mathfrak h(\rho) = \sup\{h_\nu(\phi) \hbox{ : } \Phi_\nu =\rho\}$
such that $\mathfrak p$ and $-\mathfrak h$ are Legendre conjugates 
(via the pairing $\langle \cdot,\cdot \rangle$) \cite{Rock}.
More precisely,
$-\nabla \mathfrak h :  \mathrm{int}(\mathscr B_\phi) \to H^1(M,\R)$
and $\nabla \mathfrak p : H^1(M,\R) \to \mathrm{int}(\mathscr B_\phi)$ are inverses
and
\[
\mathfrak h(\rho) = \mathfrak p((\nabla \mathfrak p)^{-1}(\rho)) - \langle (\nabla \mathfrak p)^{-1}(\rho),\rho \rangle.
\]
We write $\xi(\rho) = (\nabla \mathfrak p)^{-1}(\rho)$.
Then
$-\nabla^2 \mathfrak h(\rho) = (\nabla^2 \mathfrak p(\xi(\rho)))^{-1}$.
In particular, $\xi(0)=0$, 
$\mathcal H := -\nabla^2 \mathfrak h(0) = (\nabla^2 \mathfrak p(0))^{-1}$
is positive definite
and 
$\det \mathcal H = (\det \nabla^2 \mathfrak p(0))^{-1}
= \sigma^{-2k}$. 
We use $\mathcal H$ to define a norm $\|\cdot\|$ on $H_1(M,\R)$ by
\[
\|\rho\| = \langle \rho, \mathcal H\rho \rangle.
\]
We note that
\[
\mathfrak N(r) := \{\alpha \in H_1(M,\Z) \hbox{ : } \|\alpha\| \le r\}
\sim \mathfrak v_k \sigma^k r^k,
\]
where $\mathfrak v_k = \pi^{k/2}/\Gamma(k/2+1)$, the volume of the standard unit-ball in $\R^k$.
For small $\rho$, Taylor's theorem gives us the expansion
\begin{equation}\label{eq:taylor}
\mathfrak h(\rho) = h - \|\rho\|^2/2 + O(\|\rho\|^3).
\end{equation}

We now consider the periodic orbits of $\phi^t$. As above, we ignore the torsion in $H_1(M,\Z)$
and treat it as a lattice in $H_1(M,\R)$. We fix a fundamental domain $\mathscr F$ and,
for $\rho \in H_1(M,\R)$, we define $\lfloor \rho \rfloor \in H_1(M,\Z)$ by
$\rho-\lfloor \rho \rfloor \in \mathscr F$.

\begin{proposition}[\cite{bab-led},\cite{lalley},\cite{Sharp93}]\label{prop:uniform_asymptotic}
 Let $\phi^t : M \to M$ be a weak-mixing transitive Anosov flow. If
 $\rho \in \mathrm{int}(\mathscr B_\phi)$ and $\alpha \in H_1(M,\Z)$ then
 \[
 \#\{\gamma \in \mathscr P_T \hbox{ : } [\gamma] = \alpha + \lfloor \rho T \rfloor\}
 \sim
c(\rho)
 e^{\langle \xi(\rho),T\rho - \lfloor T\rho \rfloor -\alpha\rangle}
 \frac{e^{\mathfrak h(\rho)T}}{T^{1+k/2}},
 \]
 as $T \to \infty$,
 uniformly for $\rho$ in any compact subset of $\mathscr B_\phi$,
 where $c(\rho) =1/( (2\pi)^{k/2} \sigma^k_{\xi(\rho)} \mathfrak h(\rho))$.
\end{proposition}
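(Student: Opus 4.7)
The plan is to combine a symbolic model of the flow with Fourier inversion on the torus dual to $H_1(M,\Z)$ and a saddle-point argument, following the scheme of \cite{bab-led},\cite{lalley},\cite{Sharp93}. First I would use a Markov section to realise $\phi^t$ (up to a semi-conjugacy on a negligible set) as a suspension of a topologically mixing subshift of finite type $(X,\sigma)$ under a H\"older roof $r:X\to\R_{>0}$, so that prime periodic orbits correspond to prime periodic points of $\sigma$ with $\ell(\gamma)=r^n(x)$ and $[\gamma]=F^n(x)$ for a H\"older cocycle $F:X\to\Z^k$ determined by a dual basis of closed $1$-forms. The weak-mixing hypothesis guarantees that the pair $(r,F)$ is non-lattice, which is needed for the spectral analysis below.

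Next I would introduce the twisted Ruelle transfer operator
\[
L_{s,\psi}w(x)=\sum_{\sigma y=x}e^{-sr(y)+\langle\psi,F(y)\rangle}w(y),\qquad (s,\psi)\in\C\times\C^k,
\]
on an appropriate H\"older space. Standard perturbation theory yields a simple, isolated maximal eigenvalue $\lambda(s,\psi)$ on a complex neighbourhood of the real locus, and the implicit equation $\lambda(s,\psi)=1$ extends the pressure $\mathfrak p$ analytically, matching the Section~\ref{sec:anosov} definition for real $\psi$ and satisfying $\nabla\mathfrak p(\xi(\rho))=\rho$ and $\det\nabla^2\mathfrak p(\xi(\rho))=\sigma^{2k}_{\xi(\rho)}$. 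A Tauberian argument applied to the twisted zeta function $\zeta_\psi(s)=\prod_\gamma(1-e^{-s\ell(\gamma)+\langle\psi,[\gamma]\rangle})^{-1}$ then gives the weighted prime orbit theorem
\[
\sum_{\gamma\in\mathscr P_T}e^{\langle\psi,[\gamma]\rangle}\sim\frac{e^{\mathfrak p(\psi)T}}{\mathfrak p(\psi)T},
\]
holding uniformly on compact complex neighbourhoods of $\psi$.

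To single out the target homology class $\beta=\alpha+\lfloor T\rho\rfloor$, I would use Fourier inversion on $\T^k$,
\[
\#\{\gamma\in\mathscr P_T:[\gamma]=\beta\}=\int_{\T^k}e^{-2\pi i\langle t,\beta\rangle}\sum_{\gamma\in\mathscr P_T}e^{2\pi i\langle t,[\gamma]\rangle}\,dt,
\]
and deform the contour via $t\mapsto -i\xi(\rho)/(2\pi)+s$, so the integrand concentrates near $s=0$. Inserting
\[
\mathfrak p(\xi(\rho)+2\pi is)=\mathfrak p(\xi(\rho))+2\pi i\langle s,\rho\rangle-2\pi^2\langle s,\nabla^2\mathfrak p(\xi(\rho))s\rangle+O(|s|^3),
\]
the $s$-integral becomes Gaussian: its linear phase combines with $e^{-2\pi i\langle s,\beta\rangle}$ to leave the bounded residual $T\rho-\lfloor T\rho\rfloor-\alpha$ in the exponent, and multiplying by the factor $e^{-\langle\xi(\rho),\beta\rangle}$ produced by the contour shift together with the Legendre identity $\mathfrak p(\xi(\rho))-\langle\xi(\rho),\rho\rangle=\mathfrak h(\rho)$ recovers both the prefactor $e^{\langle\xi(\rho),T\rho-\lfloor T\rho\rfloor-\alpha\rangle}$ and the exponential rate $e^{\mathfrak h(\rho)T}$. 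The Gaussian integration contributes $(2\pi T)^{-k/2}\sigma^{-k}_{\xi(\rho)}$ and, combined with the $1/T$ from the weighted prime orbit theorem, produces the stated $T^{-1-k/2}$ and the constant $c(\rho)$.

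The main obstacle is justifying the contour deformation and controlling remainders uniformly in $\rho$ over compact subsets of $\mathrm{int}(\mathscr B_\phi)$. This requires a uniform spectral gap for $L_{s,\psi}$ along the shifted contour, non-lattice bounds on $|\lambda(\mathfrak p(\xi(\rho)),\xi(\rho)+2\pi is)|$ for $s$ bounded away from $\Z^k$, uniform $O(|s|^3)$ control of the cubic Taylor remainder, and uniformity of the perturbative eigenvalue/eigenvector estimates as $\rho$ varies; these all follow from standard Ruelle--Parry--Pollicott--Sharp machinery since $\xi(\rho)$ stays bounded and $\nabla^2\mathfrak p(\xi(\rho))$ uniformly positive definite on compact subsets of $\mathrm{int}(\mathscr B_\phi)$.
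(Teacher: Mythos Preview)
The paper does not prove this proposition at all: it is quoted as a known result, with the proof deferred entirely to the cited references \cite{bab-led}, \cite{lalley}, \cite{Sharp93}. There is therefore no ``paper's own proof'' to compare against beyond the attribution.

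That said, your sketch is a faithful outline of the argument actually carried out in those references: symbolic coding via Markov sections, twisted transfer operators and their leading eigenvalue, a weighted prime orbit theorem, Fourier inversion over $\T^k$ followed by a complex contour shift to the saddle $\xi(\rho)$, and a Gaussian stationary-phase evaluation with uniform spectral-gap control. The Legendre computation you give, combining $e^{-\langle\xi(\rho),\beta\rangle}$ with $e^{\mathfrak p(\xi(\rho))T}$ to produce $e^{\mathfrak h(\rho)T}$ and the residual prefactor, is exactly the mechanism. One small point to watch if you flesh this out: the naive weighted prime orbit theorem produces $1/(\mathfrak p(\xi(\rho))T)$ rather than $1/(\mathfrak h(\rho)T)$ in the constant, so to land on the stated $c(\rho)$ with $\mathfrak h(\rho)$ in the denominator you need to be careful about which form of the Tauberian/local-limit statement you invoke (the references obtain $\mathfrak h(\rho)$ via a slightly different normalisation of the counting problem). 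Otherwise the plan is sound and matches the cited literature.
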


If $\Phi_\mu=0$, we can set $\rho=0$ and recover the asymptotic 
\[
\#\mathscr P_T(\alpha) \sim \frac{1}{(2\pi)^{k/2} \sigma^k h}
 \frac{e^{ hT}}{T^{1+k/2}},
 \]
 originally proved by Katsuda and Sunada \cite{Katsuda-Sunada90}.
 Furthermore, for all sufficiently small $\Delta>0$, we have
 \begin{equation}\label{eqn:uniform_asymp}
 \lim_{T \to \infty} \sup_{\|\alpha| \le \Delta T}
 \left|\frac{T^{1+k/2}\#\mathscr P_T(\alpha)}{c(\alpha/T)e^{\mathfrak h(\alpha/T)T}}
 -1 \right|=0.
 \end{equation}
 
 \section{Proof of Theorem \ref{thm:main_logarithmic}}
 
 \subsection{Upper bound}

In this section we show that $(\delta-k)/2$ gives an upper bound for the limit
in Theorem \ref{thm:main_logarithmic}. The main idea is to use Proposition
\ref{prop:uniform_asymptotic}
and the Taylor expansion of $\mathfrak h(\rho)$ to replace $\mathscr D(T,A)$ with a sum of Gaussian terms over elements of $A$ with norm bounded by $\eta\sqrt{T\log T}$, for
$\eta>0$ chosen sufficiently large that the resulting error decays faster than $T^{(\delta-k)/2}$.

We begin with the trivial observation that
\[
\mathscr D(T,A) - e^{-hT}hT\#\mathscr P_T(A) = o(\mathscr D(T,A)),
\]
so that it is sufficient to consider $e^{-hT}hT\#\mathscr P_T(A)$.
We can make the following approximation.

\begin{lemma}\label{lem:approximate}
For any $\eta>0$,
\begin{align*}
\sum_{\substack{\alpha \in A \\ \|\alpha\| \le \eta\sqrt{T\log T}}}
&\left(\frac{hT\#\mathscr P_T(\alpha)}{e^{hT}}
- \frac{e^{-\|\alpha\|^2/2T}}{(2\pi)^{k/2} \sigma^k T^{k/2}}\right)
\\
&= 
o\left(T^{(\delta-k)/2} (\log T)^{\delta/2} \kappa_A(\eta \sqrt{T \log T})\right),
\end{align*}
where $\kappa_A$ is defined by equation $(\ref{eq:def_of_kappa})$.
\end{lemma}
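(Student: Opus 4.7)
The plan is to combine the uniform asymptotic $(\ref{eqn:uniform_asymp})$ for $\#\mathscr P_T(\alpha)$ with the Taylor expansion $(\ref{eq:taylor})$ of $\mathfrak h$ near $0$ to obtain a pointwise-in-$\alpha$ approximation whose relative error is a uniform $o(1)$ on the range $\|\alpha\| \le \eta\sqrt{T\log T}$, and then sum, using the definition of $\kappa_A$ to control the number of terms.

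First I would set $\rho = \alpha/T$ in Proposition~\ref{prop:uniform_asymptotic}. Since $\|\alpha\| \le \eta\sqrt{T\log T}$ forces $\|\alpha/T\| \le \eta\sqrt{\log T/T} \to 0$, for all sufficiently large $T$ every such $\alpha/T$ lies inside any prescribed neighbourhood of $0$ in $\mathrm{int}(\mathscr B_\phi)$. Hence $(\ref{eqn:uniform_asymp})$ gives
\[
\#\mathscr P_T(\alpha) = \frac{c(\alpha/T)\, e^{\mathfrak h(\alpha/T) T}}{T^{1+k/2}}\bigl(1+\epsilon_T(\alpha)\bigr),
\]
with $\sup_{\alpha}|\epsilon_T(\alpha)| \to 0$. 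Using $(\ref{eq:taylor})$,
\[
\mathfrak h(\alpha/T)\,T = hT - \frac{\|\alpha\|^2}{2T} + O\!\left(\frac{\|\alpha\|^3}{T^2}\right),
\]
and on our range the cubic remainder is $O((\log T)^{3/2}/\sqrt T) = o(1)$ uniformly, so exponentiating is harmless. Combined with continuity of $c$ at $0$ and $c(0) = 1/((2\pi)^{k/2}\sigma^k h)$, this yields
\[
\frac{hT\,\#\mathscr P_T(\alpha)}{e^{hT}} = \frac{e^{-\|\alpha\|^2/(2T)}}{(2\pi)^{k/2}\sigma^k T^{k/2}}\bigl(1+\tilde\epsilon_T(\alpha)\bigr),
\]
with $\sup_{\|\alpha\|\le\eta\sqrt{T\log T}}|\tilde\epsilon_T(\alpha)| \to 0$. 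The per-$\alpha$ error is therefore $o\!\left(T^{-k/2}\right)$ uniformly (the Gaussian factor being bounded by $1$).

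It then remains to sum. By $(\ref{eq:def_of_kappa})$,
\[
\#\{\alpha\in A : \|\alpha\|\le \eta\sqrt{T\log T}\} = \bigl(\eta\sqrt{T\log T}\bigr)^{\delta}\,\kappa_A\bigl(\eta\sqrt{T\log T}\bigr) = \eta^{\delta}\, T^{\delta/2}(\log T)^{\delta/2}\,\kappa_A\bigl(\eta\sqrt{T\log T}\bigr),
\]
so multiplying the per-term error bound by the number of terms produces exactly the claimed $o\bigl(T^{(\delta-k)/2}(\log T)^{\delta/2}\kappa_A(\eta\sqrt{T\log T})\bigr)$, with the constant $\eta^{\delta}$ absorbed in $o(1)$. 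The main obstacle is the uniformity: one must ensure both the $o(1)$ in $(\ref{eqn:uniform_asymp})$ and the cubic Taylor remainder remain $o(1)$ on the stretched range $\|\alpha\|\le \eta\sqrt{T\log T}$, which grows faster than any fixed $O(\sqrt T)$ window but just slowly enough that $\|\alpha\|^3/T^2 \to 0$; this is exactly the reason $\sqrt{T\log T}$ (and not, say, $T^{3/4}$) is the right truncation.
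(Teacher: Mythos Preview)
Your proposal is correct and follows essentially the same approach as the paper: both apply the uniform asymptotic $(\ref{eqn:uniform_asymp})$, Taylor-expand $\mathfrak h(\alpha/T)T$ and $c(\alpha/T)$ to obtain a uniform per-term error of $o(T^{-k/2})$, and then multiply by the number of summands $\mathfrak N_A(\eta\sqrt{T\log T})$. The only cosmetic difference is that the paper records the cubic remainder as an explicit factor $e^{q(\alpha,T)}$ with $|q(\alpha,T)|\le c'(\log T)^{3/2}T^{-1/2}$ before removing it, whereas you fold this directly into the multiplicative $(1+\tilde\epsilon_T(\alpha))$.
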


\begin{proof}
Let $\eta>0$. Clearly (\ref{eqn:uniform_asymp}) still holds if we take the supremum over
$\|\alpha\| \le \eta\sqrt{T\log T}$. Over this set, we have
$c(\alpha/T) = c(0) + O(\|\alpha\|/T) = c(0) + O(\sqrt{\log T}/\sqrt{T})$
and
\[
\mathfrak h\left(\frac{\alpha}{T}\right)T = ht -\frac{\|\alpha\|^2}{2T} + O\left(\frac{\|\alpha\|^3}{T^2}\right)
= hT -\frac{\|\alpha\|^2}{2T}  +O\left(\frac{(\log T)^{3/2}}{\sqrt T}\right).
\]
Substituting these in, we obtain an estimate
\[
\sup_{\|\alpha\| \le \eta \sqrt{T \log T}}
\left|\frac{hT\#\mathscr P_T(\alpha)}{e^{hT}} - 
\frac{e^{-\|\alpha\|^2/2T} e^{q(\alpha,T)}}{(2\pi)^{k/2} \sigma^k T^{k/2}}\right| = o(T^{-k/2}),
\]
where $|q(\alpha, T)| \le c'(\log T)^{3/2} T^{-1/2}$, for some $c'>0$.
A simple calculation then shows that we may remove the $q(\alpha,T)$ terms, while keeping the
$o(T^{-k/2})$ error term.
To complete the proof, we note that summing over $\|\alpha\|\le \eta \sqrt{T\log T}$
involves $\mathfrak N_A(\eta \sqrt{T \log T}) = O(T^{\delta/2} (\log T)^{\delta/2} 
\kappa_A(\eta \sqrt{T\log T}))$ summands.
\end{proof}

Next we estimate the Gaussian part from the previous lemma.

\begin{lemma}\label{lem:estimate_gaussian}
\[
\sum_{\substack{\alpha \in A \\ \|\alpha\|\le \eta\sqrt{T\log T}}}
\frac{e^{-\|\alpha\|^2/2T}}{(2\pi)^{k/2} \sigma^k T^{k/2}}
=O(T^{(\delta-k)/2} (\log T)^{\delta/2} \kappa_A(\eta\sqrt{T \log T})).
\]
\end{lemma}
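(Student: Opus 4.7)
The plan is remarkably short: the Gaussian factor can simply be dropped, and the bound then follows directly from the defining property of the discrete mass dimension.

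First, I would apply the trivial estimate $e^{-\|\alpha\|^2/2T} \le 1$, valid for every $\alpha$, which immediately gives
\[
\sum_{\substack{\alpha \in A \\ \|\alpha\|\le \eta\sqrt{T\log T}}}
\frac{e^{-\|\alpha\|^2/2T}}{(2\pi)^{k/2} \sigma^k T^{k/2}}
\le \frac{\mathfrak N_A(\eta\sqrt{T\log T})}{(2\pi)^{k/2}\sigma^k T^{k/2}}.
\]
Second, I would substitute the representation $\mathfrak N_A(r) = r^{\delta}\kappa_A(r)$ from (\ref{eq:def_of_kappa}) at $r=\eta\sqrt{T\log T}$, producing $\eta^\delta T^{\delta/2}(\log T)^{\delta/2}\kappa_A(\eta\sqrt{T\log T})$ in the numerator. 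Dividing by $T^{k/2}$ yields the claimed bound up to a multiplicative constant depending on $\eta$.

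There is no substantial obstacle: the argument is simply the observation that the Gaussian weight is bounded by $1$, combined with the counting formula. A sharper approach — splitting the range of summation into dyadic shells $2^j \le \|\alpha\| < 2^{j+1}$ and exploiting the rapid decay of the Gaussian outside $\|\alpha\| \lesssim \sqrt{T}$ — would save a factor of $(\log T)^{\delta/2}$, but this refinement is unnecessary here. The stated bound already suffices to obtain the upper bound $(\delta-k)/2$ in Theorem \ref{thm:main_logarithmic}, since the extra $(\log T)^{\delta/2}\kappa_A(\eta\sqrt{T\log T})$ factor is of order $T^{o(1)}$ and therefore disappears on the logarithmic scale.
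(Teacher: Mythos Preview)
Your proposal is correct and matches the paper's own proof essentially verbatim: the paper also simply bounds $e^{-\|\alpha\|^2/2T}\le 1$ to obtain $O(\mathfrak N_A(\eta\sqrt{T\log T})/T^{k/2})$ and then invokes the definition of $\kappa_A$. Your additional remark about the dyadic refinement is accurate but, as you note, unnecessary here.
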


\begin{proof}
The result follows from the elementary estimate
\[
\sum_{\substack{\alpha \in A \\ \|\alpha\|\le \eta\sqrt{T\log T}}}
\frac{e^{-\|\alpha\|^2/2T}}{(2\pi)^{k/2} \sigma^k T^{k/2}}
=O\left(\frac{\mathfrak N_A(\eta\sqrt{T \log T})}{T^{k/2}}\right).
\]
\end{proof}

The contribution from $\|\alpha\|>\eta\sqrt{T\log T}$ is estimated as follows.

\begin{lemma}\label{lem:large_terms}
\[
\sum_{\substack{\alpha \in A \\ \|\alpha\|>\eta\sqrt{T\log T}}}
\frac{hT\#\mathscr P_T(\alpha)}{e^{hT}} 
=O(T^{-\eta^2/2} (\log T)^{3k/2 -2}).
\]
\end{lemma}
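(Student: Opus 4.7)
The plan is to obtain a pointwise Gaussian upper bound for $\#\mathscr P_T(\alpha)$ from (\ref{eqn:uniform_asymp}) and the Taylor expansion (\ref{eq:taylor}), and then to estimate the resulting lattice tail using only the crude bound $A\subseteq H_1(M,\Z)$. I will split the sum at $\|\alpha\| = \Delta T$ for some small fixed $\Delta>0$: the contribution from $\|\alpha\|>\Delta T$ should be exponentially small in $T$, while the main range $\eta\sqrt{T\log T}<\|\alpha\|\le\Delta T$ will be handled by integral/lattice-point comparison.

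For the far range $\|\alpha\|>\Delta T$, strict concavity of $\mathfrak h$ on $\mathrm{int}(\mathscr B_\phi)$ together with $\mathfrak h(0)=h$ gives $\mathfrak h(\rho)\le h-c$ for some $c=c(\Delta)>0$ whenever $\rho\in\mathscr B_\phi$ and $\|\rho\|\ge\Delta$. Combined with the facts that $\#\mathscr P_T(\alpha)=0$ once $\|\alpha\|/T$ leaves $\mathscr B_\phi$ (so only $O(T^k)$ lattice points contribute) and that Proposition \ref{prop:uniform_asymptotic} applied at suitable $\rho$ near $\alpha/T$ gives $\#\mathscr P_T(\alpha) = O(e^{(h-c)T})$, this piece contributes $O(T^{k+1}e^{-cT})$, which is negligible next to any fixed negative power of $T$.

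In the main range, (\ref{eqn:uniform_asymp}) yields $\#\mathscr P_T(\alpha)\le 2\,c(\alpha/T)\,e^{\mathfrak h(\alpha/T)T}/T^{1+k/2}$ for large $T$, and (\ref{eq:taylor}) together with the uniform boundedness of $c(\cdot)$ on $\{\|\rho\|\le\Delta\}$ turns this into
\[
\frac{hT\,\#\mathscr P_T(\alpha)}{e^{hT}} \le \frac{C}{T^{k/2}}\exp\!\left(-(1-O(\Delta))\|\alpha\|^2/(2T)\right).
\]
Bounding the sum over $\alpha\in A$ by the sum over the full lattice $H_1(M,\Z)$, I then estimate $\sum_{\|\alpha\|>R} e^{-c_0\|\alpha\|^2/T}$, with $R=\eta\sqrt{T\log T}$ and $c_0$ close to $1/2$, by comparison with $\int_R^\infty e^{-c_0 r^2/T}\,r^{k-1}\,dr$ (using $\mathfrak N(r)\asymp r^k$). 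The substitution $u=c_0 r^2/T$ reduces this to an incomplete-gamma tail $\int_x^\infty e^{-u}u^{(k-2)/2}\,du\sim x^{(k-2)/2}e^{-x}$ with $x\asymp\eta^2\log T$, which after dividing by the prefactor $T^{k/2}$ yields $T^{-\eta^2/2}$ times a polylogarithmic factor no worse than $(\log T)^{3k/2-2}$.

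The hard part will be calibrating $\Delta$ so that the Taylor cubic error and the variation of $c(\alpha/T)$ are simultaneously small enough on $\{\|\rho\|\le\Delta\}$ for the Gaussian exponent to remain essentially $-\|\alpha\|^2/(2T)$; only then does the incomplete-gamma asymptotic deliver a clean $T^{-\eta^2/2}$ rather than a slightly weaker power. Once that is arranged, the remainder of the argument is routine lattice counting and elementary integral estimation.
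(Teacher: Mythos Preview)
Your strategy is sound and close in spirit to the paper's, but the step you flag as ``the hard part'' is not fixed by choosing a single small $\Delta>0$: with any fixed $\Delta$ the uniform bound $\exp\bigl(-(1-O(\Delta))\|\alpha\|^2/(2T)\bigr)$ has exponent $c_0<\tfrac12$, and the incomplete-gamma tail then delivers only $T^{-c_0\eta^2}$, strictly weaker than the stated $T^{-\eta^2/2}$. The remedy is an \emph{intermediate} cutoff rather than a small $\Delta$. On $\eta\sqrt{T\log T}<\|\alpha\|\le T^{3/5}$ (say) the cubic error $O(\|\alpha\|^3/T^2)$ is $o(1)$, so the exponent really is $-\|\alpha\|^2/(2T)+o(1)$ and your integral comparison gives the clean $T^{-\eta^2/2}$; on $T^{3/5}<\|\alpha\|\le\Delta T$ even the weakened Gaussian $e^{-\|\alpha\|^2/(4T)}\le e^{-T^{1/5}/4}$ dominates the $O(T^k)$ lattice points, and your far range $\|\alpha\|>\Delta T$ is handled as you say.

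The paper organises the same ingredients differently: it rescales by $\sqrt{T\log T}$, grouping lattice points into cubes $x+\mathcal C(\Delta)$ with $x\in\R^k$, and applies Proposition~\ref{prop:uniform_asymptotic} at $\rho=x\sqrt{(\log T)/T}$ to approximate the whole tail by $(\log T)^{k/2}\int_{\|x\|>\eta}e^{-\|x\|^2(\log T)/2}\,dx$. Under this rescaling the dominant contribution sits at bounded $x$, where $\|\rho\|\to0$ and the Taylor cubic is automatically $o(1)$, so the calibration issue never surfaces explicitly. Your pointwise route is a little more elementary and in fact yields a sharper polylogarithmic factor; the paper's box-counting route is shorter once one accepts the passage from cube counts to the integral.
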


\begin{proof}
Applying Proposition \ref{prop:uniform_asymptotic}, we see that, for $x \in \R^k$
and $\mathcal C(\Delta)$ a cube of (small) side length $\Delta$ based at $0$,
\begin{align*}
&hTe^{-hT} \#\left\{\gamma \in \mathscr P_T \hbox{ : }
\frac{[\gamma]}{ \sqrt{T\log T}} \in x+\mathcal C(\Delta)\right\}
\\
&\sim
he^{-hT} c(x \sqrt{(\log T)/T})
\frac{e^{\mathfrak h(x \sqrt{(\log T)/T})T}}{T^{k/2}} (\Delta \sqrt{T \log T})^k
\\
&\sim \frac{\Delta^k (\log T)^{k/2} e^{-(\|x\|^2  \log T)/2}}{(2\pi)^{k/2} \sigma^k}.
\end{align*}
Thus we can estimate the sum in the statement by $(\log T)^{k/2}I_\eta(T)$,
where $I_\eta(T)$ is the integral
\[
I_\eta(T) :=\frac{1}{(2\pi)^{k/2}\sigma^k} \int_{B(\eta)} e^{-\|x\|^2  \log T/2} \, dx,
\]
where $B(\eta) = \{x \in \R^k \hbox{ : } \|x\|>\eta\}$.
Substituting $u = x\sqrt{\log T}$ and passing to coordinates $(r,\theta)$ with $r>0$ and
$\|\theta\|=1$, we obtain
\[
I_\eta(T) = \frac{\mathrm{Area}(\{\theta \hbox{ : } \|\theta\|=1\}}{(2\pi)^{k/2}\sigma^k} 
\int_{\eta\sqrt{T}}^\infty e^{-r^2/2} r^{k-1} \, dr
=O(T^{-\eta^2/2} (\log T)^{k-2}),
\]
where we have used standard asymptotics for the complementary error function $\mathrm{erfc}(z)$.
\end{proof}

To complete the proof of the upper bound, choose $\eta >\sqrt{k-\delta}$. 
Then combining Lemmas \ref{lem:approximate}, \ref{lem:estimate_gaussian} and
\ref{lem:large_terms} and noting that
\[
\lim_{T \to \infty} \frac{\log \kappa_A(\eta \sqrt{T \log T})}{\log T} =
\lim_{T \to \infty} \frac{\log \kappa_A(\eta\sqrt{T\log T})}{\log(\eta \sqrt{T\log T})}
\, \frac{\log (\eta \sqrt{T\log T})}{\log T} =0
\]
shows that
\begin{equation}\label{eqn:upper_bound}
\limsup_{T \to \infty} \frac{\log \mathscr D(T,A)}{\log T} \le \frac{\delta-k}{2}.
\end{equation}

\subsection{Lower bound}

Since we seek a lower bound, we only need to consider
\[
\sum_{\substack{\alpha \in A \\ \|\alpha\| \le \sqrt{T}}}
\frac{hT\#\mathscr P_T(\alpha)}{e^{hT}}.
\]
The following result is almost identical to Lemma \ref{lem:approximate} and we do not 
repeat the proof.

\begin{lemma}\label{lem:approximate_version2}
\begin{align*}
\sum_{\substack{\alpha \in A \\ \|\alpha\| \le \sqrt{T}}}
\left(\frac{hT\#\mathscr P_T(\alpha)}{e^{hT}}
- \frac{e^{-\|\alpha\|^2/2T}}{(2\pi)^{k/2} \sigma^k T^{k/2}}\right)
= 
o\left(T^{(\delta-k)/2} \kappa_A( \sqrt{T})\right).
\end{align*}
\end{lemma}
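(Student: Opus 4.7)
The plan is to imitate the proof of Lemma~\ref{lem:approximate} verbatim, with the restriction $\|\alpha\|\le\eta\sqrt{T\log T}$ replaced throughout by $\|\alpha\|\le\sqrt T$, and to verify that every error estimate still works (in fact a little more cleanly, since the $\log T$ factors disappear).

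First I would apply the uniform asymptotic (\ref{eqn:uniform_asymp}) over the range $\|\alpha\|\le\sqrt T$; this is legitimate because $\sqrt T/T = T^{-1/2}$ is eventually smaller than the $\Delta$ appearing in (\ref{eqn:uniform_asymp}). Then I would Taylor expand $c(\alpha/T)$ at the origin and use (\ref{eq:taylor}) to expand $\mathfrak h(\alpha/T)T$. On the present range we have $\|\alpha\|/T = O(T^{-1/2})$ and $\|\alpha\|^3/T^2 = O(T^{-1/2})$, so
\[
c(\alpha/T) = c(0) + O(T^{-1/2}), \qquad \mathfrak h(\alpha/T)\,T = hT - \frac{\|\alpha\|^2}{2T} + O(T^{-1/2}),
\]
uniformly for $\|\alpha\|\le\sqrt T$. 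Substituting both into (\ref{eqn:uniform_asymp}) yields, exactly as in Lemma~\ref{lem:approximate}, an estimate of the form
\[
\sup_{\|\alpha\|\le\sqrt T}\left|\frac{hT\,\#\mathscr P_T(\alpha)}{e^{hT}} - \frac{e^{-\|\alpha\|^2/2T}\,e^{q(\alpha,T)}}{(2\pi)^{k/2}\sigma^k T^{k/2}}\right| = o(T^{-k/2}),
\]
now with the improved bound $|q(\alpha,T)| = O(T^{-1/2})$. The same elementary step as before, using $e^{q(\alpha,T)} - 1 = O(T^{-1/2})$ together with the boundedness of the Gaussian factor, then removes the $e^{q(\alpha,T)}$ factor at a cost still absorbable into $o(T^{-k/2})$ per summand.

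To finish I would count: by the definition (\ref{eq:def_of_kappa}) of $\kappa_A$, the number of $\alpha\in A$ in the sum is
\[
\mathfrak N_A(\sqrt T) = T^{\delta/2}\,\kappa_A(\sqrt T),
\]
and multiplying the per-term error $o(T^{-k/2})$ by this count produces the claimed bound $o(T^{(\delta-k)/2}\kappa_A(\sqrt T))$.

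There is essentially no genuine obstacle here, since tightening the radius from $\eta\sqrt{T\log T}$ down to $\sqrt T$ only shrinks every remainder; the only thing worth double-checking is the bookkeeping that the new remainder $q(\alpha,T) = O(T^{-1/2})$, when multiplied against the surviving Gaussian and then summed over $\mathfrak N_A(\sqrt T)$ terms, still yields an $o$-type error rather than merely an $O$-type one. This follows because the per-term $O(T^{-1/2})\cdot T^{-k/2}$ contribution is strictly smaller than the asserted $o(T^{-k/2})$ bound, so it is swept into it.
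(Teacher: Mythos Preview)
Your proposal is correct and follows exactly the approach the paper intends: the paper itself simply states that the argument is ``almost identical to Lemma~\ref{lem:approximate}'' and omits the details, and you have written out precisely that adaptation, with the expected simplification that the $\log T$ factors drop out when the cutoff is $\sqrt T$ rather than $\eta\sqrt{T\log T}$. Your bookkeeping on the $q(\alpha,T)$ removal step is also sound.
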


Since we have the bound
\begin{align*}
\sum_{\substack{\alpha \in A \\ \|\alpha\| \le \sqrt{T}}}
 \frac{e^{-\|\alpha\|^2/2T}}{(2\pi)^{k/2} \sigma^k T^{k/2}}
 \ge \frac{e^{-2}}{(2\pi)^{k/2} \sigma^k T^{k/2}} \mathfrak N_A(\sqrt{T})
 =  \frac{e^{-2}}{(2\pi)^{k/2} \sigma^k} T^{(\delta-k)/2} \kappa_A(\sqrt{T}),
\end{align*}
we conclude that 
\[
\liminf_{T \to \infty} \frac{\log \mathscr D(T,A)}{\log T} \ge \frac{\delta-k}{2}.
\]

\end{document}